\newcommand\footnoteref[1]{\protected@xdef\@thefnmark{\ref{#1}}\@footnotemark}
\newtheorem{Theorem}{Theorem}
\newtheorem{Lemma}[Theorem]{Lemma}
\newtheorem{Corollary}[Theorem]{Corollary}
\theoremstyle{definition}
\def\Z{\mathbb Z}
\begin{document}

\title{Non-isometric hyperbolic $3$-orbifolds with the same topological type 
and volume}

\author{J\'er\^ome Los,
Luisa Paoluzzi,
and Ant\'onio Salgueiro\footnote{Partially supported by the Centre for 
Mathematics of the University of Coimbra -- UID/MAT/00324/2013, funded by the 
Portuguese Government through FCT/MEC and co-funded by the European Regional 
Development Fund through the Partnership Agreement PT2020.}
} 
\date{\today}


\maketitle


\begin{abstract}
\vskip 2mm

We construct pairs of non-isometric hyperbolic $3$-orbifolds with the same 
topological type and volume. Topologically these orbifolds are mapping tori of 
pseudo-Anosov maps of the surface of genus $2$, with singular locus a fibred 
(hyperbolic) link with five components.

\vskip 2mm

\noindent\emph{AMS classification: } Primary 57M10; Secondary 57M50; 57M60;
37E30.

\vskip 2mm

\noindent\emph{Keywords:} Branched covers, mapping tori, (pseudo-)Anosov
diffeomorphisms.

\end{abstract}

\section{Introduction}
\label{s:introduction}

In this brief note, building upon techniques and ideas already used in 
\cite{LPS}, we construct a hyperbolic $3$-manifold that is a branched cover of
a link in another hyperbolic manifold (the mapping torus of an pseudo-Anosov 
map of the surface of genus $2$) in two different ways. More precisely we show

\begin{Theorem}
Given two integers $n>m\ge2$, there are infinitely many non isometric pairs
$(M,L)$ where $M$ is a fibred hyperbolic $3$-manifold and $L$ a five-component 
link contained in another fibred hyperbolic $3$-manifold $|O|$ such that $M$ is 
a $2mn$-sheeted branched cover of $L$ in two non-equivalent ways. 
\end{Theorem}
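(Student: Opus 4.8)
The plan is to produce the pair $(M,L)$ by a fibre-sum / mapping-torus construction that upgrades a purely $2$-dimensional picture to dimension $3$, and then to use the pseudo-Anosov hypothesis to guarantee hyperbolicity and to rule out isometries. First I would work on the fibre surface: build a genus-$2$ surface $F$ equipped with a finite group action (or, more precisely, a branched covering $F\to \Sigma$ onto a lower-genus surface $\Sigma$, with $\Sigma$ a sphere or torus) such that the same surface $F$ covers $\Sigma$ in two inequivalent ways — i.e. there are two branched covers $p_1,p_2\colon F\to\Sigma$ that are not related by homeomorphisms of $F$ and $\Sigma$ making the obvious square commute. This is the $2$-dimensional seed of the whole construction, and it is where the ``two non-equivalent ways'' ultimately comes from; the branch locus downstairs, together with the fibred structure, is what will thicken to the five-component link $L$. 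The numerical constraints $n>m\ge 2$ and the sheet number $2mn$ should enter here as the degree and ramification data of $p_1,p_2$, presumably realized by an explicit permutation representation of $\pi_1(\Sigma\setminus\text{branch points})$.

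Next I would promote this to mapping tori. Choose a homeomorphism $\phi$ of $\Sigma$ that (i) permutes the branch points so that it lifts through both $p_1$ and $p_2$ to homeomorphisms $\tilde\phi_1,\tilde\phi_2$ of $F$, and (ii) is pseudo-Anosov on $\Sigma$ relative to the branch points. Then set $|O|$ to be the mapping torus of $\phi$ — a fibred $3$-manifold — with $L$ the union of the mapping tori of the (periodic) orbits of the branch points, a link with the prescribed five components; and set $M$ to be the mapping torus of $\tilde\phi_1$, which by construction is a $2mn$-sheeted cover of $|O|$ branched over $L$. The key point is that the \emph{same} $3$-manifold $M$ also arises as the mapping torus of $\tilde\phi_2$ (after identifying $F$ with itself appropriately), giving the second branched-covering structure $M\to(|O|,L)$; inequivalence of $p_1$ and $p_2$ as branched covers of surfaces should force inequivalence of the two resulting $3$-dimensional branched coverings, by restricting any putative equivalence to a fibre.

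For hyperbolicity I would invoke Thurston's hyperbolization for fibred $3$-manifolds: $|O|$ is hyperbolic once $\phi$ is pseudo-Anosov (which I arranged above), and $M$ is hyperbolic once $\tilde\phi_1$ is pseudo-Anosov on $F$ — this follows because $\tilde\phi_1$ covers a pseudo-Anosov map, so its invariant foliations are the pull-backs of those of $\phi$ and have no leaves bounding once one checks the lifted foliations have no $1$-prong singularities at the preimages of the branch points of the wrong order; a careful but routine Euler–Poincaré count on $F$ handles this. The ``infinitely many'' comes from varying $\phi$ through an infinite family of pseudo-Anosov maps (e.g.\ composing with powers of a suitable Dehn twist or point-pushing map that still preserves the branch data), producing infinitely many mutually non-isometric $|O|$ and hence non-isometric pairs; non-isometry within the family is detected by, say, hyperbolic volume or the dilatation of the monodromy. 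The main obstacle I anticipate is arranging (i) and (ii) simultaneously: the lifting condition constrains $\phi$ to a finite-index subgroup of the relevant mapping class group (the stabilizer of the two covering data), and one must check that this subgroup still contains pseudo-Anosov elements — and an infinite supply of them — while also ensuring the lifted foliations on $F$ are genuinely pseudo-Anosov (no $1$-prongs) so that $M$ is hyperbolic rather than merely irreducible; verifying inequivalence of the two branched coverings of $M$, as opposed to mere non-isometry, is the other delicate bookkeeping step.
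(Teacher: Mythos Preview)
Your overall architecture---build inequivalent branched covers of surfaces, lift a pseudo-Anosov monodromy, take mapping tori, invoke Thurston for hyperbolicity, vary the monodromy for infinitely many examples---matches the paper's. But the specific setup you sketch is upside down and, as written, cannot produce the theorem. You take the genus-$2$ surface $F$ as the \emph{total space} and a sphere or torus $\Sigma$ as the base. Then $|O|$, the mapping torus of $\phi\colon\Sigma\to\Sigma$, is a sphere- or torus-bundle over $S^1$; such a manifold is never hyperbolic (it is $S^2\times S^1$, Sol, Nil, or Euclidean), so the hypothesis ``$|O|$ a fibred hyperbolic $3$-manifold'' fails outright. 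Riemann--Hurwitz also obstructs your picture: a degree-$2mn$ cover of a torus branched at five points would force the total space to have Euler characteristic $\le -5$, incompatible with genus~$2$. In the paper the genus-$2$ surface is the \emph{base} (so $|O|$ really is hyperbolic), and $M$ has fibre a surface of large genus $g=6mn-m-n+1$.

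There is also a structural idea you are missing. You propose two lifts $\tilde\phi_1,\tilde\phi_2$ and assert their mapping tori coincide ``after identifying $F$ with itself appropriately''; this is exactly the crux, and nothing in your outline forces it. The paper handles it by working with a \emph{diamond} of regular covers
\[
S_g \longrightarrow \Sigma_5(m,m,n,n)\ \begin{matrix}\nearrow\\\searrow\end{matrix}\ \Sigma_2(2,2,2m,2m,n),\ \Sigma_2(2,2,m,2n,2n)\ \begin{matrix}\searrow\\\nearrow\end{matrix}\ T(2,2,2m,2n),
\]
together with an extra order-$4$ symmetry of $\Sigma_5$ (a rotation exchanging the $m$- and $n$-cone points) that descends to an elliptic involution of the torus $T$. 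One chooses a linear Anosov on $T$ commuting with this involution and lifts a suitable power to the whole diamond; the order-$4$ symmetry then conjugates the two induced maps on the two genus-$2$ orbifolds, making their mapping tori homeomorphic with equivalent five-component links, while the single lift to $S_g$ gives one manifold $M$. Inequivalence of the two branched covers is then immediate from the distinct ramification tuples $(2,2,2m,2m,n)$ versus $(2,2,2n,2n,m)$. Your plan needs both the reversal of roles and this common-quotient-plus-extra-symmetry mechanism to go through.
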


Here $L$ is transverse to the fibration of the manifold in which it is 
contained.

As a straightforward consequence of the above result, we obtain 

\begin{Corollary}
There are infinitely many pairs of non-isometric hyperbolic $3$-orbifolds that 
have the same topological type and volume. 
\end{Corollary}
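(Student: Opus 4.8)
The strategy is to read the Corollary off the Theorem by turning the two branched coverings into two orbifold structures on one and the same pair $(|O|,L)$. Fix integers $n>m\ge 2$ and let $(M,L)$, with $L\subset|O|$, together with the two non-equivalent $2mn$-sheeted branched coverings $p_1,p_2\colon M\to|O|$ branched along $L$, be as supplied by the Theorem. For $i=1,2$ and each of the five components $L^{(j)}$ of $L$, let $b_i^{(j)}\ge 2$ be the branching index of $p_i$ along $L^{(j)}$ (these branched coverings are tame, as is automatic over a link), and let $\mathcal O_i$ denote the $3$-orbifold whose underlying space is $|O|$ and whose singular locus is $L$, with $L^{(j)}$ carrying the cone order $b_i^{(j)}$. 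Then $p_i$ is precisely a degree-$2mn$ orbifold covering $M\to\mathcal O_i$ for which $M$ carries no orbifold points, so each $\mathcal O_i$ is very good.

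Next I would check that the $\mathcal O_i$ are closed hyperbolic $3$-orbifolds of equal volume. The orbifold fundamental group $\pi_1^{\mathrm{orb}}(\mathcal O_i)$ contains the cocompact Kleinian group $\pi_1(M)$ as a subgroup of index $2mn$; by a standard argument (Mostow rigidity, or alternatively the geometrization of $3$-orbifolds) $\pi_1^{\mathrm{orb}}(\mathcal O_i)$ is itself a discrete cocompact group of isometries of $\mathbb H^3$ with quotient $\mathcal O_i$, so $\mathcal O_i$ is hyperbolic and $p_i$ becomes a Riemannian orbifold covering of degree $2mn$; hence $\mathrm{vol}(\mathcal O_1)=\mathrm{vol}(M)/2mn=\mathrm{vol}(\mathcal O_2)$. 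Moreover $\mathcal O_1$ and $\mathcal O_2$ have the same topological type: their underlying spaces both coincide with the genus-$2$ fibred manifold $|O|$ and their singular loci both coincide with the five-component fibred link $L$, the two orbifold structures differing only in the cone orders $b_i^{(j)}$ — exactly the picture announced in the abstract.

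The only remaining, and essential, point is that $\mathcal O_1$ and $\mathcal O_2$ are not isometric, and this is the step I expect to carry the real weight. If an isometry $\mathcal O_1\to\mathcal O_2$ existed it would in particular be an orbifold homeomorphism, hence would produce a homeomorphism of pairs $(|O|,L)\to(|O|,L)$ taking the labelling $(b_1^{(j)})_j$ to the labelling $(b_2^{(j)})_j$ up to the induced permutation of the components of $L$. To exclude this I would appeal to the explicit construction of $p_1$ and $p_2$ carried out in the proof of the Theorem and verify that the two resulting assignments of branching indices along $L$ are genuinely distinct and are not intertwined by any self-homeomorphism of $(|O|,L)$; this is where the non-equivalence of the two coverings is really exploited, and it is the delicate part, since mere non-equivalence of $p_1$ and $p_2$ does not by itself force the \emph{labelled} orbifolds $\mathcal O_1,\mathcal O_2$ to differ. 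Granting this, $\mathcal O_1\not\cong\mathcal O_2$. Finally, performing the construction over the infinitely many pairs $(M,L)$ furnished by the Theorem yields infinitely many pairs $(\mathcal O_1,\mathcal O_2)$ of non-isometric hyperbolic $3$-orbifolds with the same topological type and the same volume, which is the assertion of the Corollary.
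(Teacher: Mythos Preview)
Your approach is correct and coincides with the paper's: the two orbifolds are precisely the quotients of $M$ by the two deck-transformation groups of order $2mn$, both with underlying pair $(|O|,L)$ and volume $\mathrm{vol}(M)/2mn$. The step you flag as delicate is in fact immediate once one reads off the explicit ramification data from the construction: the cone orders along the five components of $L$ are $(2,2,2m,2m,n)$ for one covering and $(2,2,2n,2n,m)$ for the other, and since $n>m\ge 2$ these two \emph{multisets} already differ (for instance their maxima are $\max(2m,n)<2n$), so no orbifold isomorphism---hence no isometry---can exist, and there is no need to analyse self-homeomorphisms of $(|O|,L)$.
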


Indeed, the two orbifolds of each pair mentioned in the corollary are obtained 
as quotients of $M$ by the action of two groups of order $2mn$: in both cases, 
the space of orbits is $|O|$ and the points with non trivial stabilisers map 
onto $L$ with orders of ramification $(2,2,2m,2m,n)$ in one case and 
$(2,2,2n,2n,m)$ in the other.

In order to build our examples, we start by describing certain (branched) 
covers of surfaces. As in \cite{LPS}, the $3$-manifolds and orbifolds we are
looking for will be then obtained as mapping tori of pseudo-Anosov maps 
defined on the surfaces considered. The maps will be chosen so that they
``commute" with the covering projections, indeed all maps will be lifts
of a single Anosov map defined on the common quotient of all surface covers,
that is a torus in this construction.

\section{Surface covers}\label{s:surfcov}

Given two integers $n>m\ge2$ we wish to construct two non-equivalent
$2mn$-sheeted covers from the (closed, connected, orientable) surface $S_g$ of 
genus $g=6nm-n-m+1$ onto the surface of genus $2$, branched over five points, 
with orders of ramification $(2,2,2m,2m,n)$ for the first cover and 
$(2,2,2n,2n,m)$ for the second one. We shall see the bases of the two covers as 
two $2$-orbifolds: $\Sigma_2(2,2,2m,2m,n)$ and $\Sigma_2(2,2,2n,2n,m)$. Both 
orbifolds are orbifold double covers of the torus $T(2,2,2m,2n)$ with four cone 
points of orders $(2,2,2m,2n)$. 

In order to make the construction of the different coverings easier to 
understand we will separate it in two different steps. In step one we will
construct an $mn$-sheeted branched cover from $S_g$ to the orbifold
$\Sigma_5(m,m,n,n)$ of genus five with four cone points of orders $(m,m,n,n)$. 
In step two we will show that the orbifold $\Sigma_5(m,m,n,n)$ double covers 
both $\Sigma_2(2,2,2m,2m,n)$ and $\Sigma_2(2,2,m,2n,2n)$. Using the fact that 
the deck transformations of the covering constructed commute, we will also see 
that $\Sigma_2(2,2,2m,2m,n)$ and $\Sigma_2(2,2,m,2n,2n)$ have a common 
quotient, which is the $\Z/2\times\Z/2$ quotient of $\Sigma_5(m,m,n,n)$. This 
quotient will be $T(2,2,2m,2n)$.

\begin{figure}[h]
\begin{center}
 {
  \includegraphics[height=2cm]{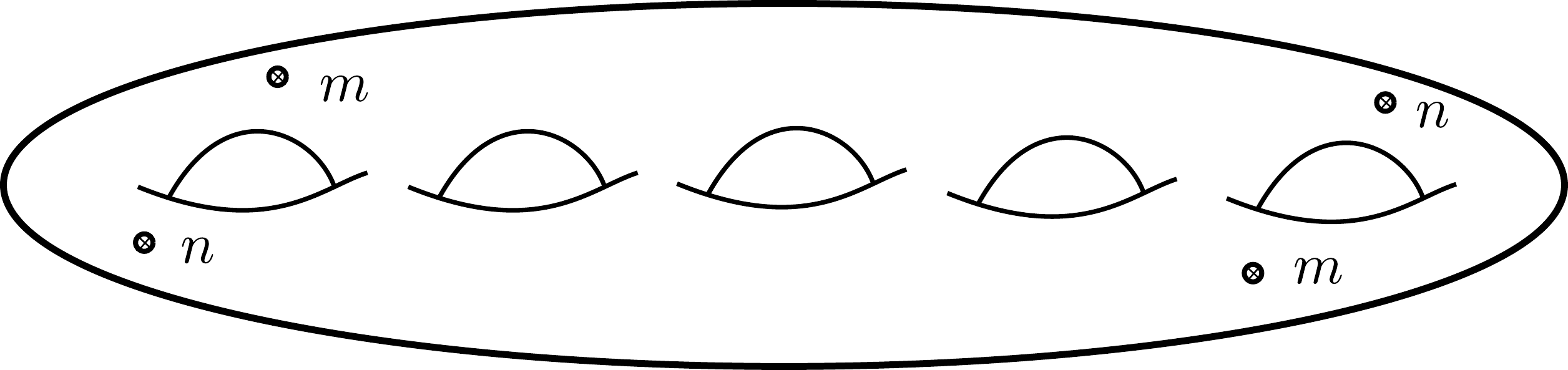}
 }
\end{center}
\caption{The orbifold $\Sigma_5(m,m,n,n)$.}
\label{fig2}
\end{figure}

The branched covers just discussed can be summarised in the following commuting
diagram of covers:

\begin{center}
\begin{tikzcd}[column sep=-10pt]
& S_g \arrow{d} & \\
&  \Sigma_5(m,m,n,n) \arrow{dl} \arrow{dr}&\\
\Sigma_2(2,2,2m,2m,n) \arrow{dr} && \Sigma_2(2,2,m,2n,2n) \arrow{dl}\\
& T(2,2,2m,2n) &\\
\end{tikzcd}
\end{center}

Note that all covers associated to arrows appearing in the diagram are regular.
The group of deck transformations associated to $S_g\longrightarrow
\Sigma_5(m,m,n,n)$ is isomorphic to $\Z/m\times\Z/n$, the other being
double covers, as already observed.

\subsection{Step one}

In this part we start by constructing a cover of order $mn$ from the
surface of genus $(m-1)(n-1)$ onto the $2$-sphere with four branch points, two
of order $n$ and two of order $m$. Having fixed an integer $h\ge 0$ we will
then adapt the construction in order to have a cover from the surface of genus 
$g=hmn+(m-1)(n-1)$ onto the surface of genus $h$ again with four branch points,
two of order $n$ and two of order $m$. Observe that for the case we are 
interested in we have $h=5$ and $g=6nm-n-m+1=5mn+(m-1)(n-1)$.

To construct the covering we want, it is sufficient to find a surface of genus
$(m-1)(n-1)$ admitting a symmetry of type $\Z/m\times\Z/n$ where the generators
of both cyclic subgroups have fixed points belonging to precisely two orbits of
the $\Z/m\times\Z/n$-action. A simple way to build a surface having prescribed 
symmetry is to use a symmetric graph and see the 
surface as the boundary of a regular neighbourhood of a standard embedding of 
the graph in $3$-space (as was done in \cite{LPS}).

We will build a graph embedded in the $3$-sphere. To make things more
explicit, it is convenient to see the $3$-sphere ${\mathbf S}^3\subset
{\mathbb C}^2$ as the set  of points $(z_1,z_2)$ such that $|z_1|^2+|z_2|^2=1$. 
Note that ${\mathbf S}^3$ admits a $\Z/m\times\Z/n$-action defined on the
generators as $(z_1,z_2)\mapsto (e^{2i\pi/m}z_1,z_2)$ and 
$(z_1,z_2)\mapsto (z_1,e^{2i\pi/n}z_2)$. Consider now the following sets of
points in $3$-sphere: $A=\{p_k=(e^{2ik\pi/m},0) \mid k=0,\dots, m-1\}$ and
$B=\{q_l=(0,e^{2il\pi/n})\mid l=0,\dots, n-1\}$. Observe that both sets are
invariant by the $\Z/m\times\Z/n$-action by construction. The graph we are
interested in is the complete bipartite graph with sets of vertices $A$ and $B$ and set of edges $\{e_{kl}:k=0,\ldots,m-1;l=0,\ldots,n-1\}$ where 
$e_{kl}=\{(\cos t\,p_k+\sin t\,q_l):0<t<\pi/2\}$. It is clear that the graph is 
embedded in ${\mathbf S}^3$ in a $\Z/m\times\Z/n$-equivariant way. 
Figure~\ref{fig_tori} shows the case $n=3$ and $m=4$, where the solid tori 
$\{(z_1,z_2)\in{\mathbf S}^3:|z_2|^2\leq \frac12\}$ and 
$\{(z_1,z_2)\in{\mathbf S}^3:|z_2|^2\geq \frac12\}$ have their boundaries identified, 
and their cores are the components of a Hopf link. Figure~\ref{fig1} shows the 
whole graph for $n=2$ and $m=3$. 
\begin{figure}[h]
\begin{center}
 {
  \includegraphics[height=6cm]{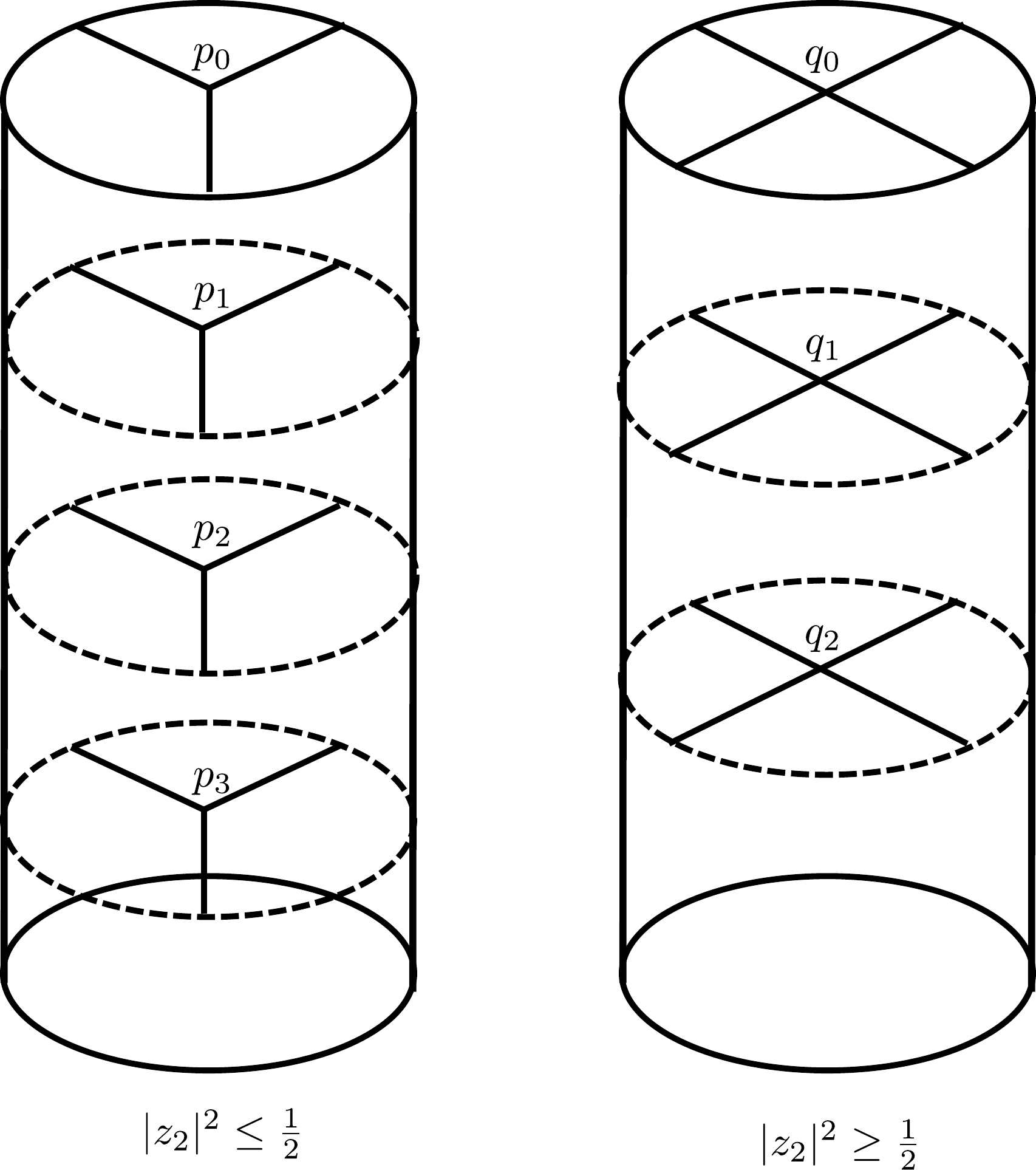}
 }
\end{center}
\caption{A local picture of the $(n,m)$-complete bipartite graph with vertices 
on the Hopf link for $n=3$ and $m=4$.}
\label{fig_tori}
\end{figure}

\begin{figure}[h]
\begin{center}
 {
  \includegraphics[height=4cm]{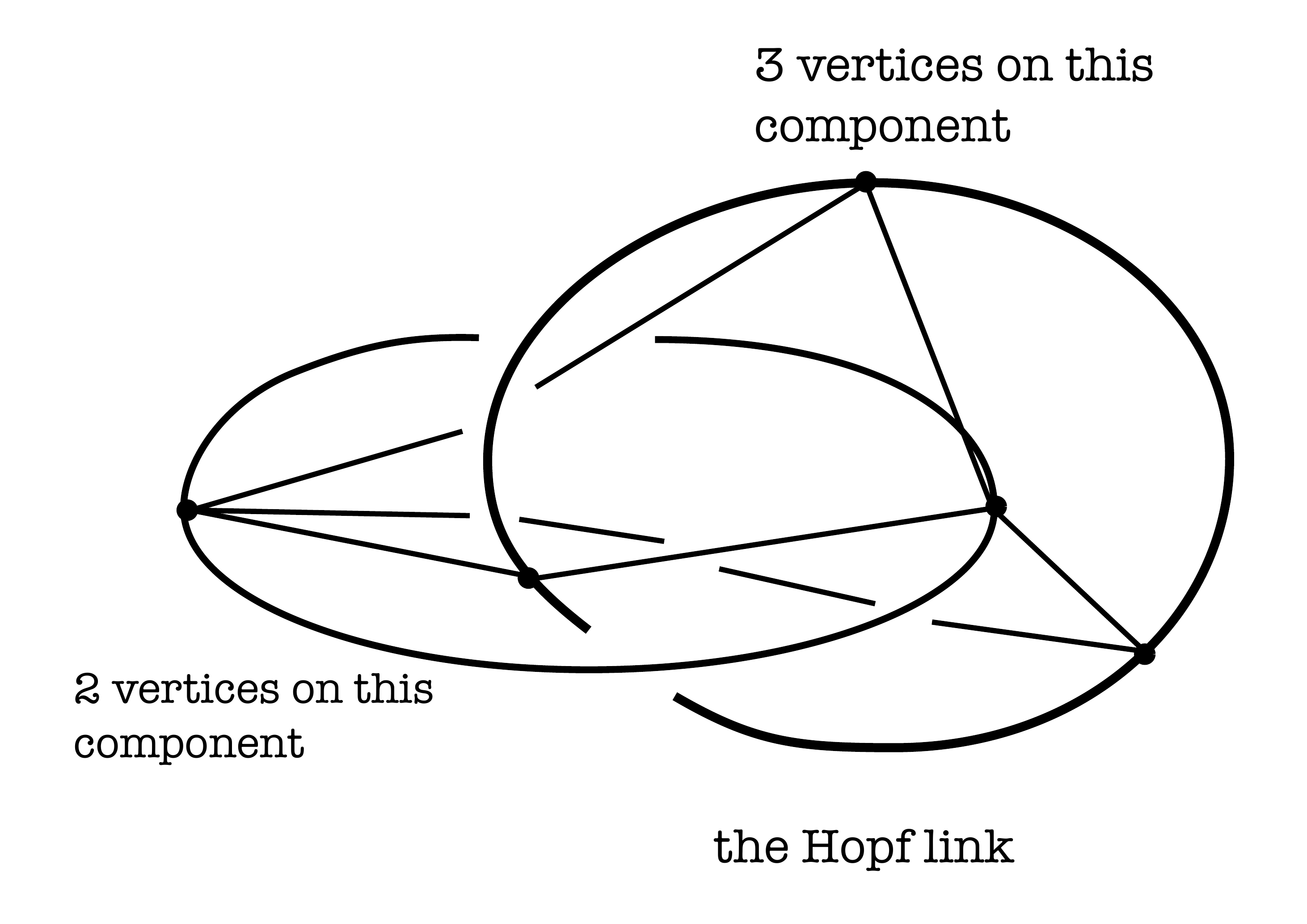}
 }
\end{center}
\caption{A global picture of the $(n,m)$-complete bipartite graph with vertices 
on the Hopf link for $n=2$ and $m=3$.}
\label{fig1}
\end{figure}

We then obtain the 
desired surface by taking the boundary of a sufficiently small regular and 
equivariant neighbourhood of the graph. Note that the Euler characteristic of 
our graph is $m+n-nm$, so it follows immediately that the genus of the boundary 
surface is $mn-m-n+1$. We wish to stress that the fixed-point set of the cyclic 
subgroup of order $m$ acting on the $3$-sphere is the circle of equation 
$z_1=0$ containing the set $B$ and, similarly, the fixed-point set of the 
cyclic subgroup of order $n$ acting is the circle of equation $z_2=0$ 
containing the set $A$. As a consequence, nearby each point of $A$ 
(respectively $B$) the cyclic group of order $n$ (respectively $m$) has two 
fixed points on the surface. Since the $\Z/m\times\Z/n$-action is freely 
transitive on the edges of the graph, it is not hard to see that the quotient 
of this surface by the action is a sphere with two cone points of order $m$ and 
two of order $n$. 

\begin{figure}[h]
\begin{center}
 {
  \includegraphics[height=5cm]{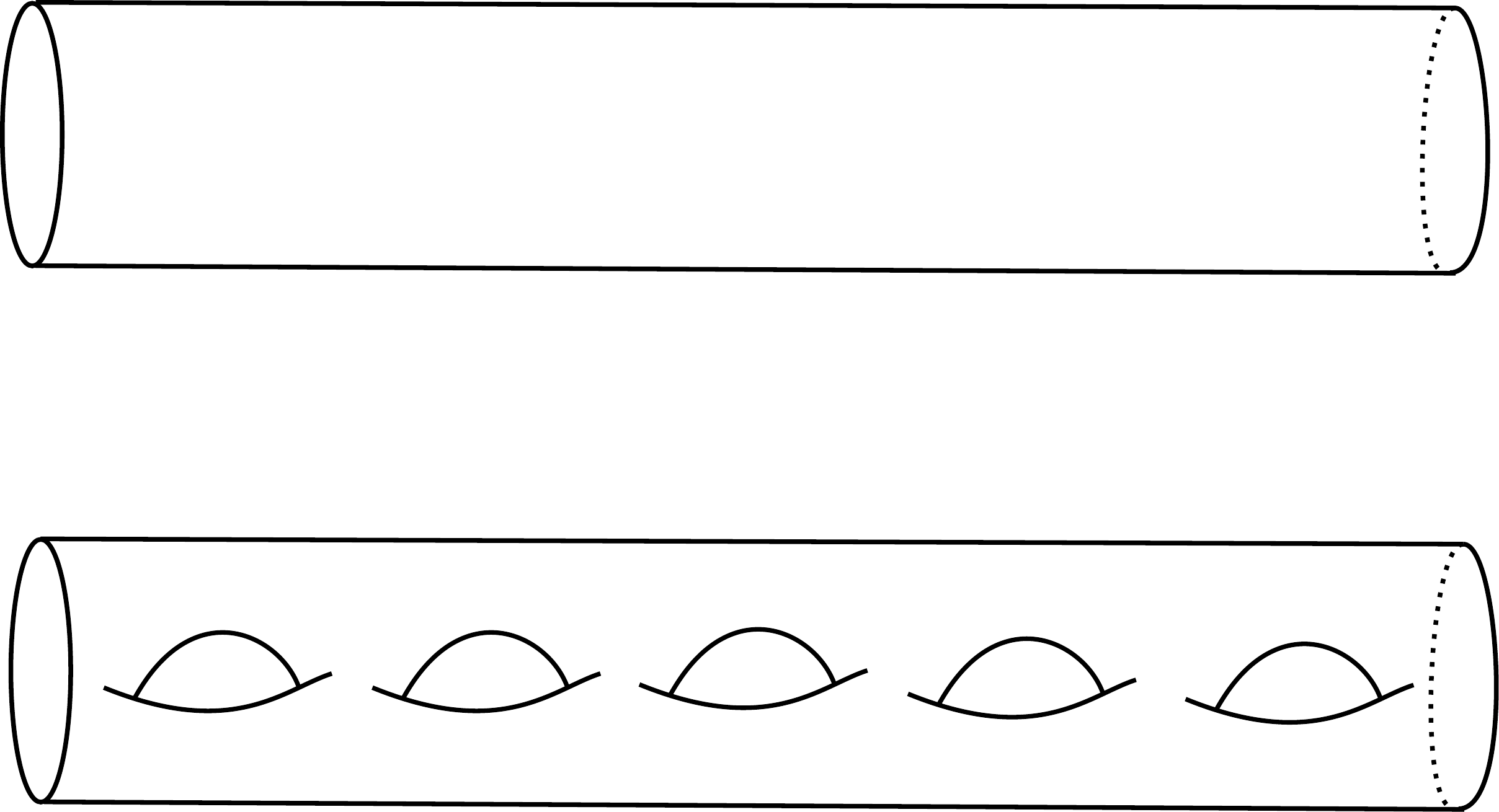}
 }
\end{center}
\caption{A tube and a surface of genus $h=5$ that replaces it in the
construction.}
\label{fig4}
\end{figure}

To construct a covering with the same type of action from the surface of genus
$hmn+(m-1)(n-1)$ onto a surface of genus $h$, we start by observing that the
surface we have just built can be decomposed into $n$ $m$-holed spheres, $m$
$n$-holed spheres and $mn$ tubes each attached on one side to some hole of a 
sphere of the first type and on the other to some hole of a sphere of the 
second type. To generalise the construction it suffices to replace each tube 
with a surface of genus $h$ with two holes (see Figure~\ref{fig4}): the 
boundary components of the surface are attached as the boundary components of 
the original tubes were. It is obvious that one can carry out this construction 
in a $\Z/m\times\Z/n$-equivariant way.

\subsection{Step two}

\begin{figure}[h]
\begin{center}
 {
  \includegraphics[height=6cm]{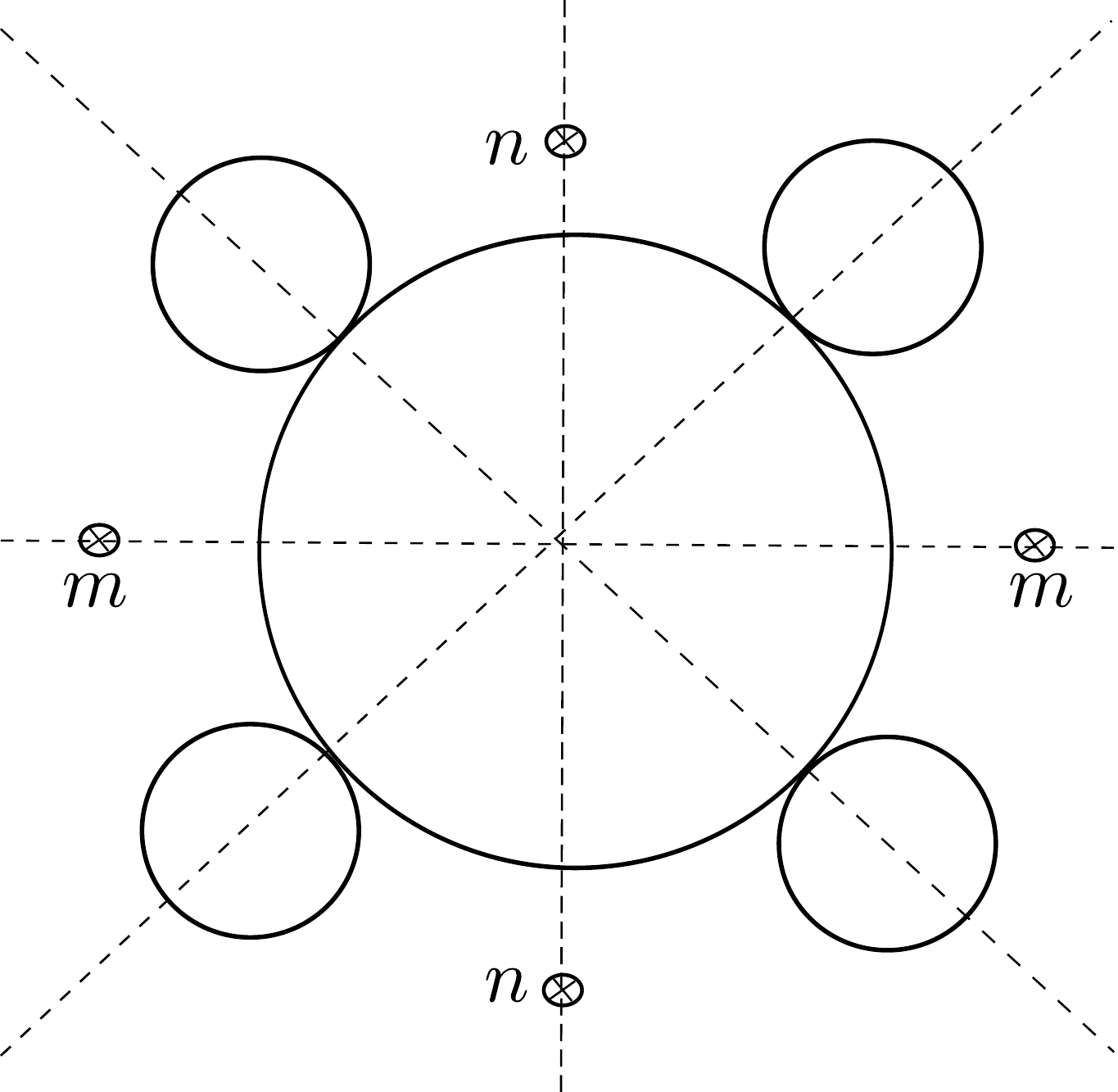}
 }
\end{center}
\caption{The circle pattern in the $x_3=0$-plane and the position of the cone
points of orders $n$ and $m$ on the genus-$5$ surface boundary of a regular
neighbourhood (not shown in the picture).}
\label{fig3}
\end{figure}

For the remaining coverings we follow the same strategy: we start by
considering a symmetric graph embedded in ${\mathbb R}^3$. Consider the unit
circle $C_0$ in the plane of equation $x_3=0$. In the same plane, consider four 
circles $C_i$, $i=1,2,3,4$, of centres $(1,1,0)$, $(-1,1,0)$, $(-1,-1,0)$, and 
$(1,-1,0)$ respectively, all of radius $\sqrt{2}-1$. The set 
$\Gamma=\cup_{i=0}^4 C_i$ is a graph in the plane of equation $x_3=0$. By
construction, $\Gamma$ is invariant by the action of the $\pi$-rotations about
the $x_1$ and $x_2$ axes. Once more, consider a small and invariant regular
neighbourhood of $\Gamma$: its boundary is clearly a surface of genus $5$. We
wish to identify this surface with the quotient $\Sigma_5(m,m,n,n)$ of the
cover constructed in Step one. We do so by imposing that the outermost points
of intersection of the surface with the $x_1$ axis are the two cone points of 
order $m$ and the outermost points of intersection of the surface with the 
$x_2$ axis are two cone points of order $n$. It is now straightforward to
realise that the quotient of $\Sigma_5(m,m,n,n)$ by the $\pi$-rotation about
the $x_1$ (respectively $x_2$) axis is the orbifold $\Sigma_2(2,2,2m,2m,n)$
(respectively $\Sigma_2(2,2,2n,2n,m)$). The two $\pi$-rotations commute and
generate a Klein group of order four. The quotient of $\Sigma_5(m,m,n,n)$ by 
its action is $T(2,2,2m,2n)$. 

Remark that, although the two double branched covers from the surface of genus
five to that of genus two are equivalent, the $2mn$-sheeted branched coverings 
$S_g\longrightarrow \Sigma_2(2,2,2m,2m,n)$ and $S_g\longrightarrow
\Sigma_2(2,2,2n,2n,m)$ are not, for the orders of ramification are different.
Notice that the two double branched covers are equivalent since the surface of 
genus five admits a cyclic symmetry of order four (a $\pi/2$-rotation about the 
$x_3$-axis) which conjugates the two covering involutions. This symmetry 
preserves the marked points, although not their orders and generates, together 
with the Klein group, a dihedral group of order eight.

\section{Mapping tori}

We wish now to construct $3$-manifolds as mapping tori of homeomorphisms
defined on the surfaces built in the previous section. We want, moreover,
that the mapping tori fulfill some extra requirements.

\begin{Lemma}
Consider the five surfaces $S_g$, $\Sigma_5(m,m,n,n)$, $\Sigma_2(2,2,2m,2m,n)$, 
$\Sigma_2(2,2,2n,2n,m)$, and $T(2,2,2m,2n)$ constructed in the previous
section. For each of the five surfaces it is possible to choose a homeomorphism
of the surface in such a way that the five chosen homeomorphisms and the
associated mapping tori satisfy the following conditions. 
\begin{enumerate}
\item Each covering of surfaces induces a covering of the corresponding mapping
tori.
\item The mapping tori associated to the homeomorphisms of 
$\Sigma_2(2,2,2m,2m,n)$ and $\Sigma_2(2,2,2n,2n,m)$ are homeomorphic.
\item The cone points of $\Sigma_2(2,2,2m,2m,n)$ and $\Sigma_2(2,2,2n,2n,m)$ 
are fixed by the chosen homeomorphisms and they close up to equivalent
five-component links in the mapping tori.
\end{enumerate} 
\end{Lemma}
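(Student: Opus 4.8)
The plan is to build all five homeomorphisms as lifts of a single Anosov map on the torus $T(2,2,2m,2n)$, exploiting the tower of regular covers displayed in the diagram. First I would fix a linear Anosov diffeomorphism $\psi$ of the $2$-torus whose action on homology is given by a hyperbolic matrix in $SL(2,\Z)$, chosen so that the underlying affine map permutes the four points that become the cone points of orders $(2,2,2m,2n)$, fixing each of them (for instance, one can take the four half-lattice points $\frac12\Z^2/\Z^2$, which are permuted — indeed fixed — by any matrix in $SL(2,\Z)$ acting on $\R^2/\Z^2$, since $A\cdot\frac12 v\equiv\frac12 v$ mod $\Z^2$ exactly when $(A-I)v\in 2\Z^2$; this holds for all $v$ precisely when $A\equiv I$ mod $2$, so I would replace $\psi$ by a suitable power $\psi^N$ so that its homology matrix is congruent to the identity modulo $2$, and simultaneously modulo $m$ and $n$ if needed to guarantee the permutation of cone points lifts correctly). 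The key point is that on the orbifold $T(2,2,2m,2n)$ the map $\psi^N$ is an orbifold homeomorphism fixing every cone point.

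Next I would successively lift $\psi^N$ up the tower. Each arrow in the diagram is a regular branched cover, so a homeomorphism of the base lifts to the total space provided it preserves the branch data and the corresponding subgroup of $\pi_1^{orb}$ of the base; since $\psi^N$ acts trivially on the relevant finite quotients (this is exactly what the congruence conditions on the homology matrix buy us, together with the fact that the covers are abelian — $\Z/2\times\Z/2$ for the lower arrows and $\Z/m\times\Z/n$ for $S_g\to\Sigma_5$), the lift exists, and one may choose it to commute with the deck transformations. Concretely I would lift $\psi^N$ to $\bar\psi$ on $\Sigma_5(m,m,n,n)$, then observe that because $\bar\psi$ is the lift of a map on the common quotient it descends (or rather, its construction is compatible) to homeomorphisms on both $\Sigma_2(2,2,2m,2m,n)$ and $\Sigma_2(2,2,2n,2n,m)$ sitting between $\Sigma_5$ and $T$, and finally lift $\bar\psi$ further to $\Psi$ on $S_g$ using regularity of $S_g\to\Sigma_5(m,m,n,n)$. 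Since all five maps fit into commuting squares with the covering projections, condition (1) is immediate: each surface cover together with the chosen homeomorphisms yields a commuting square, hence a branched cover of mapping tori.

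For condition (2), I would use the extra symmetry noted at the end of Step two: the $\pi/2$-rotation $\rho$ about the $x_3$-axis conjugates the two covering involutions on $\Sigma_5(m,m,n,n)$, and hence conjugates the two quotient orbifolds $\Sigma_2(2,2,2m,2m,n)$ and $\Sigma_2(2,2,2n,2n,m)$; if I arrange that $\bar\psi$ commutes with $\rho$ (again by passing to a power, so that the lifted map is canonical enough to be $\rho$-equivariant — this is automatic if $\bar\psi$ is chosen as the unique lift fixing a base point lying on the $x_3$-axis, since $\rho$ then permutes lifts in a way compatible with $\bar\psi$), then $\rho$ descends to a homeomorphism carrying the first quotient homeomorphism to the second, and therefore the two mapping tori are homeomorphic via the homeomorphism induced by $\rho$ on the quotients. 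Condition (3) follows in the same breath: the cone points of the two genus-two orbifolds are the images of the fixed points of the covering involutions, which are fixed (as a set, and after passing to a power, pointwise) by the chosen homeomorphisms, so each cone point sweeps out a circle (a section of the mapping torus) and $\rho$ carries the five-component link in one mapping torus to the five-component link in the other.

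The main obstacle I anticipate is the bookkeeping needed to make a single choice of power $\psi^N$ work for \emph{all} the compatibility requirements at once: the lift up each abelian cover must preserve the defining subgroup, the lifts must commute with the deck groups, \emph{and} the top lift must be $\rho$-equivariant so that condition (2) holds — and one must check that after imposing all these congruences ($N$ divisible by enough to kill the action on $H_1$ mod $2$, mod $m$, mod $n$, and to fix base points) the resulting map is still pseudo-Anosov on each surface (which it is, being a lift of an Anosov map through a branched cover — lifts of pseudo-Anosov maps through finite covers are pseudo-Anosov, as their invariant foliations pull back to invariant foliations with the same dilatation). So the real work is verifying existence of the equivariant lifts and the pointwise-fixing of cone points, rather than any subtlety about hyperbolicity, which is deferred to later sections anyway.
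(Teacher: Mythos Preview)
Your plan is essentially the paper's: produce all five maps as lifts of a single map on the torus and use the order-four rotation $\rho$ of $\Sigma_5$ to conjugate the two genus-two quotients, yielding conditions (2) and (3). The one substantive difference is how the $\rho$-equivariance of the lift $\bar\psi$ is secured. You impose congruence conditions on the Anosov matrix and then assert that a suitable power of $\bar\psi$ commutes with $\rho$; the paper instead descends one step further, to the sphere $T/\iota$ where $\iota$ is the elliptic involution of the torus induced by $\rho$, and chooses the initial homeomorphism $\varphi$ there (fixing the six relevant orbit points). Any lift of $\varphi$ back to the torus then commutes with $\iota$ by construction, so the $\rho$-equivariance upstairs comes for free and the bookkeeping you anticipate at the end simply disappears.

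Your route can be made to work, but the justification you give for $\rho$-equivariance is not quite right as stated. Choosing ``the unique lift fixing a base point on the $x_3$-axis'' does not by itself force $\bar\psi\rho=\rho\bar\psi$: what you actually need is that $\psi^N$ on the torus commutes with the induced elliptic involution $\iota$. This does follow from your congruence condition $A^N\equiv I\pmod 2$ (since then $A^N$ fixes the half-lattice translation defining $\iota$ as an affine map $x\mapsto -x+v$), after which $\rho^{-1}\bar\psi\rho$ differs from $\bar\psi$ by a deck transformation in the Klein four-group, and squaring kills it because $\bar\psi$ was chosen to commute with the deck group. So replace the base-point remark with this two-line argument and your proof goes through; the paper's sphere-quotient device is just a tidier packaging of the same idea.
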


\begin{proof}
The first condition is easy to fulfill. Indeed, it is sufficient to choose any
homeomorphism $\psi$ of the torus which fixes four points (corresponding to the 
four cone points of $T(2,2,2m,2m)$). Since we are dealing with finite covers 
there is a non-trivial power of $\psi$ that lifts to all covers and, up to 
choosing possibly a further power, we can even ensure that the fibres of the 
cone points are fixed pointwise by the lift. 

To ensure that the remaining two conditions hold, we need that the lifts of
$\psi$ to $\Sigma_2(2,2,2m,2m,n)$ and $\Sigma_2(2,2,m,2n,2n)$ are conjugate, 
and the conjugation maps the cone points of the first orbifold to those of the
second. 

To achieve this we need to start with a further quotient of our surfaces. We
point out that this quotient will not be the base space of an orbifold covering
with total space $S_g$. Consider $T(2,2,2m,2m)$. If we forget the orders of the
cone points we see that the symmetry of order four of the surface of genus five
induces an elliptic involution of $T(2,2,2m,2m)$ which exchanges the two cone
points of order two and those of orders $2m$ and $2n$. Observe that this
elliptic involution is also induced by any of the two $\pi$-rotations about the
diagonals $x_1=\pm x_2$ in the $x_3=0$ plane of the previous construction (see
Figure~\ref{fig3}).  

We can thus start with a homeomorphism $\varphi$ of the $2$-sphere quotient of 
the torus by the action of the elliptic involution. We require that $\varphi$ 
fixes six points: two points $a$ and $b$ corresponding to the orbits of the 
cone points of $T(2,2,2m,2m)$ and four others corresponding to the orbits of 
the fixed-points of the elliptic involution. 
Up to passing to a power, we can assume that $\varphi$ lifts to a homeomorphism
$\psi$ of the covering torus which fixes each point $a_1,a_2$ in the fibre of 
$a$ and $b_1,b_2$ in the fibre of $b$. We identify this torus with 
$T(2,2,2m,2n)$ in such a way that $a_1$ and $a_2$ are cone points of order $2$, 
$b_1$ of order $2m$ and $b_2$ of order $2n$. We also require that $\{a_i,b_i\}$ 
are the images of the fixed points of the $\pi$-rotation about the axis $x_i$, 
$i=1,2$. 

With this choice of $\psi$, the mapping tori associated to the lifts of $\psi$
discussed above satisfy all given requirements.
\end{proof}

The diagram of surface orbifold covers give thus rise to a diagram of induced
$3$-dimensional orbifold covers between mapping tori:

\begin{center}
\begin{tikzcd}[column sep=-10pt]
& M \arrow{d} & \\
&  N(m,m,n,n) \arrow{dl} \arrow{dr}&\\
O(2,2,n,2m,2m) \arrow{dr} && O(2,2,m,2n,2n) \arrow{dl}\\
& Q(2,2,2m,2m) &\\
\end{tikzcd}
\end{center}

We stress again that here the orbifolds $O(2,2,n,2m,2m)$ and $O(2,2,2n,2n,m)$
have the same topological type, that is are homeomorphic as manifolds and their
singular sets are equivalent links. 

\section{Hyperbolic structures}

To ensure that our mapping tori are hyperbolic manifolds (and orbifolds) it
suffices to choose the homeomorphism $\psi$ described in the previous
section to be Anosov. This can be done without considering the quotient by the
elliptic involution. Indeed, each linear Anosov map commutes with the standard
elliptic involution corresponding to minus the identity; in particular, the
elliptic involution preserves the fixed points of the Anosov map. Now, to our 
purposes it is sufficient that the linear Anosov we choose has at least four 
fixed points that are exchanged in pairs by the hyperelliptic involution. 
Indeed, we only need to identify these two orbits with the set of cone points 
of order two, and of order $2m$ and $2n$ respectively. Since Anosov maps have 
periodic orbits of arbitrarily large orders, up to perhaps taking a power, we 
can assume that our linear Anosov has at least eight fixed points. Now the 
standard elliptic involution has precisely four fixed points, all other points
belonging to orbits with two elements. We can thus conclude that the Anosov map
has at least four points that are exchanged in pairs by the standard elliptic
involution.

Note that all finite group of deck transformations can be realised as isometry
groups for the hyperbolic structure of the $3$-manifolds or orbifolds on which
they act. Here again, since the orders of the groups acting are arbitrarily
large we see that we must have infinitely many non isometric manifolds $M$
having two non equivalent branched covers on the same link.

Also, by taking powers of $\psi$ we obtain infinitely many commensurable
examples and infinitely many links. Indeed, for a fixed choice of $n>m\ge2$,
and for a fixed link $L$, the set of possible volumes of orbifolds with 
singular set $L$ and ramifications orders of the form $(2,2,2n,2n,m)$ or 
$(2,2,2m,2m,n)$ must be finite. However, by taking mapping tori associated to
powers of $\psi$ the volume must grow, because they cover each other.  

Note that the five-component links in $|O|$ are fibred and hyperbolic. Indeed,
the pseudo-Anosov maps defined over the surface of genus $2$ with five cone
points restrict to pseudo-Anosov maps of the five-punctured genus-$2$ surface
obtained by removing the cone points.  

\begin{footnotesize}

\textsc{Aix-Marseille Univ, CNRS, Centrale Marseille, I2M, UMR 7373,
13453 Marseille, France}

{jerome.los@univ-amu.fr}

\textsc{Aix-Marseille Univ, CNRS, Centrale Marseille, I2M, UMR 7373,
13453 Marseille, France}

{luisa.paoluzzi@univ-amu.fr}

\textsc{Department of Mathematics, University of Coimbra, 3001-454 Coimbra, Portugal}

{ams@mat.uc.pt}

\end{footnotesize}

\end{document}